\newcommand*{\Positives}{\mathbb{Z}^{+}}
\newcommand*{\Naturals}{\mathbb{N}}
\providecommand{\remove}[1]{}
\theoremstyle{plain}
\newtheorem{theorem}{Theorem}[section]
\newtheorem{lemma}[theorem]{Lemma}
\newtheorem{proposition}[theorem]{Proposition}
\newtheorem{corollary}[theorem]{Corollary}
\theoremstyle{definition}
\newtheorem{definition}[theorem]{Definition}
\theoremstyle{remark}
\newtheorem{remark}[theorem]{Remark}
\newcommand{\R}{\mathcal{R}}
\newcommand{\E}{\mathcal{E}}
\newcommand{\I}{\mathcal{I}}
\newcommand{\calJ}{\mathcal{J}}
\newcommand{\chicf}{{{\chi_{\text{cf}}}}}
\newcommand*{\floor}[1]{\lfloor#1\rfloor}%
\newcommand*{\ceil}[1]{\lceil#1\rceil}%
\newcommand*{\length}{\textup{len}}%
\newcommand{\cardin}[1]{\lvert {#1} \rvert}
\newcommand{\card}[1]{\lvert {#1} \rvert}
\begin{document}

\title{Conflict-free coloring with respect to a subset of intervals}

\author{%
Panagiotis Cheilaris\thanks{Department of Informatics,
Universit\`a della Svizzera italiana,
6900 Lugano, Switzerland.
\texttt{cheilarp@usi.ch}} \and
Shakhar
Smorodinsky\thanks{Mathematics department, Ben-Gurion University,
Be'er Sheva 84105, Israel. \texttt{shakhar@math.bgu.ac.il}}}

\date{}
\maketitle

\begin{abstract}
Given a hypergraph $H=(V,\cal E)$, a coloring of its vertices is
said to be conflict-free if for every hyperedge $S \in \cal E$
there is at least one vertex in $S$ whose color is distinct from
the colors of all other vertices in $S$.
The discrete interval hypergraph $H_n$ is the hypergraph with vertex set
$\{1,\dots,n\}$ and hyperedge set the family of all subsets of consecutive
integers in $\{1,\dots,n\}$.
We provide a polynomial time algorithm for conflict-free coloring
any subhypergraph of $H_n$, we show that the algorithm
has approximation ratio 2, and we prove that our analysis is tight, i.e.,
there is a subhypergraph for which the algorithm computes a solution which uses
twice the number of colors of the optimal solution.
We also show that the problem of deciding whether a given subhypergraph of $H_n$ can
be colored with at most $k$ colors
has a quasipolynomial time algorithm.
\end{abstract}


\section{Introduction}%
\label{sec:intro}

A hypergraph $H$ is a pair $(V,\E)$, where $V$ is a finite set
and $\E$ is a family of non-empty subsets of $V$. We denote by
$\Positives$ the set of positive integers and by $\Naturals$
the set of non-negative integers.

\begin{definition}\label{def:origcf}
Let $H=(V,\E)$ be a hypergraph and let $C$ be a coloring
$C \colon V \rightarrow \Positives$:
We say that $C$ is a {\em conflict-free coloring} (cf-coloring in short) if for
every hyperedge $e \in \E$ there exists a color $i \in \Positives$
such that $\card{e \cap C^{-1}(i) }=1$. That is, every
hyperedge $e \in \E$ contains some vertex whose color is unique
in $e$.
\end{definition}



The study of cf-coloring was initiated in the work of Even et al.
\cite{ELRS} and of Smorodinsky \cite{SmPHD} and was extended in
numerous other works (c.f.,
\cite{cf9,AS08,BCOScpc2010,CheilarisCUNYthesis2009,%
cf7,CKS2009talg,cf5,HS02,Lev-TovP09,CFPT09,cf1}).
The study was initially motivated by its application to frequency
assignment for cellular networks. A cellular network consists of
two kinds of nodes: \emph{base stations} and \emph{mobile
clients}. Base stations have fixed positions, modeled by a finite set of
points in the plane, and provide the
backbone of the network. Every
base station emits at a fixed frequency. If a client
wants to establish a link with a base station it has to tune
itself to this base station's frequency. Clients, however, can be
in the range of many different base stations. To avoid
interference, the system must assign frequencies to base stations
in the following way: For any closed disk $d$ in the plane
(representing the communication range of a client located at the
center of this disk), there must be at least one
base station which is contained in $d$ and has a frequency that is not
used by any other base station contained in $d$. Since frequencies
are limited and costly, a scheme that reuses frequencies, where
possible, is desirable.

Here is a more general, formal definition:
Let $P$ be a set of $n$
points in the plane and let $\R$ be a family of regions in the
plane (e.g., all closed discs). We
denote by $H=H_{\R}(P)$ the hypergraph on the set $P$ whose
hyperedges are all subsets $P'$ that can be cut off from $P$ by a
region in $\R$. That is, all subsets $P'$ such that there exists
some region $r \in \R$ with $r\cap P = P'$. We refer to such a
hypergraph as the hypergraph \emph{induced by $P$ with respect to
$\R$}.

Now, consider the hypergraph induced by a set of $n$ \emph{collinear}
points with respect to the family of closed disks in the plane.
It is not difficult to see that this hypergraph is isomorphic to the
hypergraph induced by a set of $n$ real numbers with respect to the family of
closed intervals,
which is also isomorphic to the following discrete interval hypergraph.

\begin{definition}
Let $[n] = \{1,\dots,n\}$.
For $s \leq t$, $s, t \in [n]$, we define the {(discrete) interval}
$[s,t] = \{i \in [n] \mid s \leq i \leq t\}$.
The \emph{discrete interval hypergraph} $H_n$ has vertex set $[n]$ and
hyperedge set
$\I_n = \{  [s,t]  \mid {s \leq t}\text{, }{s, t \in [n]} \}$.

\end{definition}

It is not difficult to prove that
$\lfloor \log_2 n \rfloor + 1$ colors are necessary and sufficient
in order to cf-color $H_n$
(see, e.g., \cite{ELRS}).
An online variation of this cf-coloring problem in which
vertices appear one by one and the algorithm has to commit to
a color for each point as soon as it appears,
maintaining the conflict-free property
of the point set at every time,
was introduced in \cite{cf7} and further studied in
\cite{BCS2008talg}.

In this paper, we are interested in cf-coloring subhypergraphs of
$H_n$ of the following form:
$H = ([n], I)$, where $I \subseteq \I_{n}$.
Then, $H$ is a hypergraph induced by $n$ points on the real line
with respect to a \emph{subset} of all possible intervals.
Cf-colorings of such hypergraphs were studied
in the online setting in \cite{BCS2008talg}.
Katz et al., in \cite{cf2}, claim a 4-approximation polynomial time
cf-coloring for any such hypergraph $H$ (in the offline setting).
Studying cf-coloring for subhypergraphs of geometric hypergraphs
can be justified by applications where only a given subset of the
hyperedge set is required to have the conflict-free property.

In section~\ref{sec:hittingsetalg}, we describe an algorithm
for computing cf-colorings for general hypergraphs,
based on hitting sets.
In section~\ref{sec:2approx}, we show how the above algorithm
and an appropriate choice of the hitting set can give a 2-approximation
polynomial time
algorithm for cf-coloring a subhypergraph of the discrete interval
hypergraph, improving on the 4-approximation algorithm of Katz et al.
In section~\ref{sec:tight}, we show that the above analysis is tight, i.e.,
there are subhypergraphs of $H_n$ for which
the algorithm computes a cf-coloring with twice the optimal
(minimum) number of colors.
In section~\ref{sec:quasip}, we show that the
decision problem whether a given
subhypergraph of $H_n$ can be cf-colored with
at most $k$ colors has a quasipolynomial time algorithm;
this implies that this decision problem is probably not
NP-complete.

\section{A hitting-set algorithm for conflict-free coloring}%
\label{sec:hittingsetalg}

In this section, we present an algorithm for conflict-free
coloring a hypergraph. It is based on repeatedly computing a
minimal hitting set in hypergraphs.

\begin{definition}
A \emph{hitting set} of a hypergraph $H = (V,\E)$
is a subset $S \subseteq V$ such that for every $e \in \E$
there exists some $v \in S$ with $v \in e$. A hitting set $S$ is
\emph{minimal} if for every $v \in S$, $S \setminus \{v\}$
is not a hitting set.
\end{definition}

In the literature, a conflict-free coloring is an
assignment of colors (positive integers) to the vertices of the hypergraph.
In this work, we introduce and
consider a slight variation of conflict-free coloring, in which
we allow some vertices to not be assigned colors, as long as
in every hyperedge, there
exists a vertex with assigned color that is
uniquely occurring in the hyperedge.
In other words, we allow the coloring function
$C\colon V \to \Positives$ in definition~\ref{def:origcf}
to be a partial function. Alternatively, we
can use a special color `0' given to vertices that are not
assigned any positive color and obtain a total function $C \colon V
\to \Naturals$. Then, we arrive at the following variant of
definition~\ref{def:origcf}.

\begin{definition}
Let $H=(V,\E)$ be a hypergraph and let
$C \colon V \rightarrow \Naturals$:
We say that $C$ is a {\em conflict-free coloring} if for
every hyperedge $S \in \E$ there exists a color $i \in \Positives$
such that $\card{S \cap C^{-1}(i) }=1$.
We denote by
$\chicf(H)$ the minimum integer $k$ for which $H$ admits a
cf-coloring with colors in $\{0,\dots,k\}$.
\end{definition}

\begin{remark}
We claim that this variation of conflict-free coloring,
with the partial coloring function or the placeholder color `0',
is interesting from the point of view
of applications. As mentioned in section~\ref{sec:intro},
vertices model base stations in a cellular network. A vertex with
no positive color assigned to it can model a situation where a base station
is not activated at all, and therefore the base station does not
consume energy.
One can also think of a bi-criteria optimization problem where a
conflict-free assignment of frequencies has to be found with small
number of frequencies (in order to conserve the frequency spectrum)
and few activated base stations (in order to conserve energy).
\end{remark}

We describe algorithm~\ref{alg:hitset} for conflict-free coloring any
hypergraph $H = (V,\E)$.

\begin{algorithm}[htbp]
\caption{A hitting set algorithm for conflict-free coloring $H=(V,\E)$}%
\label{alg:hitset}
\begin{algorithmic}
\STATE $\ell \gets 0$; $V^0 \gets V$; $\E^0 \gets \E$
\WHILE{$\E^\ell \neq \emptyset$}
  \STATE $S^\ell \gets \text{a minimal hitting set for $(V^\ell, \E^\ell)$}$
  \STATE color every $v \in V^\ell \setminus S^\ell$ with color $\ell$
  \STATE $V^{\ell+1} \gets S^\ell$
  \STATE $\E^{\ell+1} \gets \{e \cap S^{\ell} \mid e \in \E^\ell \text{ and }
              |e \cap S^{\ell}| > 1 \}$
  \STATE $\ell \gets \ell+1$
\ENDWHILE
\STATE 
\textbf{if} $V^\ell \neq \emptyset$
\textbf{then} color every $v \in V^\ell$ with color $\ell$
\textbf{end if}
\end{algorithmic}
\end{algorithm}

\begin{lemma}
Algorithm~\ref{alg:hitset} terminates.
\end{lemma}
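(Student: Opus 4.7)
The plan is to exhibit a strictly decreasing monovariant across the iterations of the while loop. The natural choice is the cardinality $|\E^\ell|$, and I would prove that $|\E^{\ell+1}| < |\E^\ell|$ whenever the loop body executes. Since $|\E^0| = |\E|$ is finite, this immediately yields termination in at most $|\E|$ iterations.

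The key ingredient is the minimality of $S^\ell$. First I would observe that if $\E^\ell \neq \emptyset$, then $S^\ell \neq \emptyset$, because every (nonempty) hyperedge must contain at least one vertex of any hitting set. Now pick any $v \in S^\ell$. Minimality of $S^\ell$ means that $S^\ell \setminus \{v\}$ is not a hitting set for $\E^\ell$, so there exists a hyperedge $e_v \in \E^\ell$ that is unhit by $S^\ell \setminus \{v\}$. This forces $e_v \cap S^\ell = \{v\}$, in particular $|e_v \cap S^\ell| = 1$. By the rule defining $\E^{\ell+1}$, the hyperedge $e_v$ contributes no element to $\E^{\ell+1}$.

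Combining this with the trivial fact that the map $e \mapsto e \cap S^\ell$ sends each hyperedge in $\E^\ell$ to at most one hyperedge in $\E^{\ell+1}$, I would conclude $|\E^{\ell+1}| \leq |\E^\ell| - 1$, as desired.

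The only subtle point is the correct use of minimality to produce the witness hyperedge $e_v$ that is eliminated in the next round; once that witness is identified, termination follows in one line. I do not expect any obstacle beyond this small bookkeeping step.
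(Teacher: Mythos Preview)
Your proposal is correct and follows essentially the same approach as the paper: both argue that $|\E^\ell|$ strictly decreases by using minimality of $S^\ell$ to produce a hyperedge $e$ with $|e\cap S^\ell|=1$ that does not survive to $\E^{\ell+1}$. Your version simply spells out in more detail how minimality yields this witness and why the map $e\mapsto e\cap S^\ell$ then gives $|\E^{\ell+1}|\le|\E^\ell|-1$, whereas the paper condenses these steps into two sentences.
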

\begin{proof}
At every iteration of the loop, there is some hyperedge $e \in \E^\ell$
for which $|e \cap S^{\ell}| = 1$. 
This follows from the minimality of $S^\ell$.
Thus, $|\E^\ell| > |\E^{\ell+1}|$. Therefore, the number of
hyperedges decreases at every iteration of the loop, and
necessarily reaches zero after a finite number of iterations of
the loop.
\end{proof}

\begin{lemma}
Algorithm~\ref{alg:hitset} produces a conflict-free coloring.
\end{lemma}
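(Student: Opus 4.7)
The plan is to track, for each original hyperedge $e \in \E$, its remnant across iterations of the loop and to locate the iteration at which $e$ acquires a uniquely colored vertex. Since $\E^0 = \E$, the edge $e$ is present in $\E^0$. I would inductively define the remnant $e^\ell$ by $e^0 = e$, and, if $|e^\ell \cap S^\ell| > 1$, by $e^{\ell+1} = e^\ell \cap S^\ell$; otherwise $e$ drops out of the hyperedge set at the end of iteration $\ell$. A straightforward induction gives the invariant $e^\ell = e \cap V^\ell$ whenever $e^\ell$ is defined, using $V^{\ell+1} = S^\ell \subseteq V^\ell$.

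Let $\ell^*$ be the first iteration at which $e$ drops out; such an $\ell^*$ exists by the preceding termination lemma. Since $S^{\ell^*}$ is a hitting set for $\E^{\ell^*}$ and $e^{\ell^*} \in \E^{\ell^*}$, we have $|e^{\ell^*} \cap S^{\ell^*}| \geq 1$, and by the dropout condition $|e^{\ell^*} \cap S^{\ell^*}| \leq 1$. Therefore there is a unique vertex $v \in e^{\ell^*} \cap S^{\ell^*}$, which by the invariant equals $e \cap S^{\ell^*}$.

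Next I would compare the color of $v$ with the colors assigned to the other vertices of $e$. Any $u \in e \setminus \{v\}$ falls into one of two cases: either $u \notin V^{\ell^*}$, in which case $u \in V^{\ell'} \setminus S^{\ell'}$ for some $\ell' < \ell^*$ and $u$ was assigned color $\ell' < \ell^*$; or $u \in V^{\ell^*}$, in which case $u \in e^{\ell^*}$ but $u \notin S^{\ell^*}$ (since $v$ is the only element of that intersection), so $u$ was assigned color $\ell^*$. In either case $u$ receives a color at most $\ell^*$. On the other hand, $v \in S^{\ell^*} = V^{\ell^*+1}$, so $v$ is colored no earlier than iteration $\ell^*+1$, whether inside the loop or in the final ``leftover'' step; hence $v$ receives some color $\geq \ell^*+1$. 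Consequently $v$'s color strictly exceeds that of every other vertex of $e$, so $v$'s color occurs exactly once in $e$, which is the conflict-free property.

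The main obstacle is the bookkeeping around the invariant $e^\ell = e \cap V^\ell$ and the dropout iteration; once that invariant is pinned down, the color comparison is almost immediate. A minor point to verify is that the final step, which colors leftover vertices with the terminal value of $\ell$, does not interfere with the argument, because that terminal $\ell$ is at least $\ell^*+1$ for every hyperedge $e$ that has already dropped out.
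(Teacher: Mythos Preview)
Your proof is correct and follows essentially the same route as the paper: both arguments locate, for each original hyperedge $e$, the first level $\ell$ at which $|e \cap S^\ell| = 1$ (the paper phrases it as one plus the maximum $i$ with $|e \cap S^i| > 1$, using the chain $S^0 \supseteq S^1 \supseteq \cdots$ directly rather than your invariant $e^\ell = e \cap V^\ell$), and then observe that the unique vertex $v \in e \cap S^\ell$ receives a color strictly greater than $\ell$ while every other vertex of $e$ receives a color at most $\ell$. Your explicit bookkeeping with the remnants $e^\ell$ and the case split on $u \in V^{\ell^*}$ versus $u \notin V^{\ell^*}$ is a slightly more detailed unpacking of the same color comparison the paper makes in one sentence.
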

\begin{proof}
We first show that
for every hyperedge $e \in \E$, there is some $\ell$ for which
$\cardin{{e} \cap S^{\ell}} = 1$.
Notice that for every iteration $i>0$, we have
$S^{i-1} \supseteq S^i$.
If $\cardin{e \cap S^0} > 1$, consider the maximum $i$ for which
$\cardin{e \cap S^i} > 1$.
Then, hyperedge $e \cap S^i = e \cap V^{i+1}$
belongs to $\E^{i+1}$ and has to be hit by $S^{i+1}$, i.e.,
$(e \cap S^{i}) \cap S^{i+1} = e \cap S^{i+1}$ is non-empty
and thus $\cardin{e \cap S^{i+1}} = 1$,
because of the maximality of $i$.

Let $v$ be the one element of ${e} \cap S^{\ell}$.
Vertex $v$ is colored with some color greater than
$\ell$ by the algorithm and all other vertices of $e$ are colored
with colors which are at most of value $\ell$. Thus, $e$ has the
conflict-free property.
\end{proof}

\section{A 2-approximation algorithm for a set of intervals}%
\label{sec:2approx}

We use algorithm~\ref{alg:hitset}, described in the previous section,
to conflict-free color a subhypergraph of $H_n$ which is comprised of a given
subset $I \subseteq \I_n$ of intervals.
It is necessary to specify how to compute the minimal hitting set.

The minimal hitting set $S$ is computed as follows 
(in fact, we compute a minimum cardinality hitting set, but we do
not need this stronger fact):
\begin{quote}
First, we compute a special independent set of intervals $F \subseteq I$
(i.e., in $F$ no two intervals have a common vertex).
We compute this independent set $F$ of intervals incrementally.
Initially, there is nothing in the independent set. We scan
vertices from $1$ to $n$ and we include in the independent set the
interval $[i,j] \in I$ with minimum $j$ such that $[i,j]$ does not
intersect anything already in the independent set. After computing
$F$, for every interval $[i,j] \in F$, we take in $S$ the vertex $j$
(i.e., the maximum or rightmost vertex).
\end{quote}

\begin{lemma}
$S$ is a minimal hitting set.
\end{lemma}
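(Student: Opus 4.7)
The plan is to verify the two defining properties of a minimal hitting set separately: (i) every interval $[a,b] \in I$ contains some vertex of $S$, and (ii) for every $s \in S$, the set $S \setminus \{s\}$ fails to hit some interval of $I$.

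For (i), I would use the greedy construction of $F$ by increasing right endpoint. Fix $[a,b] \in I$ and split into cases. If $[a,b]$ itself lies in $F$, then $b$ is placed in $S$ and $b \in [a,b]$, so we are done. Otherwise, $[a,b]$ was rejected during the scan, meaning there exists some $[i',j'] \in F$ already present with $[i',j'] \cap [a,b] \neq \emptyset$. Because the greedy considers intervals in order of increasing right endpoint, we have $j' \leq b$, and the non-empty intersection forces $\max(i',a) \leq \min(j',b) = j'$. Consequently $a \leq j' \leq b$, so $j' \in S \cap [a,b]$ as required.

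For (ii), I would exploit the key property that distinct intervals of $F$ share no common vertex. Let $s \in S$; then $s = j$ for a unique $[i,j] \in F$. For any other $s' \in S$, say $s' = j'$ coming from $[i',j'] \in F$, if $s' \in [i,j]$ then $j'$ would be a common vertex of $[i,j]$ and $[i',j']$, violating the independence of $F$. Hence $[i,j] \cap S = \{s\}$, and $[i,j]$ is no longer hit once $s$ is removed.

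The main obstacle I anticipate is in step (i), specifically the claim that the blocking interval $[i',j'] \in F$ satisfies $j' \leq b$. This needs a careful appeal to the order in which the algorithm considers candidates, together with the treatment of ties at the right endpoint $b$: if several intervals of $I$ end at $b$ then at most one can lie in $F$, but any such one contributes $b \in [a,b]$ to $S$, which still suffices. Once this ordering issue is handled cleanly, the intersection computation pinning $j'$ inside $[a,b]$ is mechanical.
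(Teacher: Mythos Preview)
Your proposal is correct and follows essentially the same two-part structure as the paper: establish hitting, then establish minimality via the witness interval in $F$. The only cosmetic difference is that for part~(i) the paper argues contrapositively (an interval missed by $S$ would lie strictly between consecutive points of $S$, or before the first, or after the last, and hence would have been selected into $F$), whereas you argue constructively by exhibiting the point $j' \in S$ that hits $[a,b]$; both rest on the same greedy-by-right-endpoint property, and your handling of the ordering and tie issues is sound.
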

\begin{proof}
Set $S$ is a hitting set because no interval is completely contained
between two vertices in $S$, no interval ends before the first
interval in $F$, and no interval starts after the last interval in
$F$; otherwise such intervals would be chosen in the independent set
$F$. Set $S$ is minimal, because removing any element $j$ of it, means
that the interval with right endpoint $j$ in $F$ is not hit any
more.
\end{proof}

\begin{remark}
The computation of the maximal (in fact maximum)
independent set of intervals given above
is also known as a solution to the activity selection problem.
See for example \cite[section~16.1]{CLRS}.
\end{remark}

Notice that the time complexity of the algorithm is $O(n\log{n})$:
We sort the intervals according to their right endpoints.
Then, at every iteration of the loop we can choose the hitting set
in linear time. There is at most a logarithmic number of
iterations of the loop, because
$\chicf(H) \leq \chicf(H_n) = \floor{\log_{2}{n}}+1$.

We intend to compare colorings produced by the above algorithm
with optimal colorings.
We define recursively the following families of sets of intervals
of $\Positives$.

\begin{definition}
Family $\calJ_1$ exactly contains all singleton sets of intervals. 
For $k>1$, 
set of intervals $I$ is in family $\calJ_k$ 
if and only if it can be expressed as a
union $I = L \cup R \cup \{\iota\}$, 
where both $L$, $R \in \calJ_{k-1}$, 
no interval from $L$ has a
common point with an interval from $R$,
and interval $\iota$ 
includes every interval 
in $L$ and every interval in $R$.

We refer to a set of intervals in family $\calJ_k$ as a 
\emph{$\calJ_k$ configuration}.
\end{definition}

\begin{lemma}
Any conflict-free coloring uses at least $k$ colors for a set of
intervals that is a superset of a $\calJ_k$ configuration.
\end{lemma}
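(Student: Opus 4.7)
The plan is to proceed by induction on $k$, following the recursive definition of $\calJ_k$. The base case $k=1$ is immediate: a $\calJ_1$ configuration consists of a single non-empty interval, so any cf-coloring must give at least one vertex of that interval a positive color, and hence uses at least one color.

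For the inductive step, suppose $I' \supseteq I$ with $I = L \cup R \cup \{\iota\} \in \calJ_k$, and assume toward a contradiction that some cf-coloring of $I'$ uses only colors from $\{0,1,\ldots,k-1\}$. Since $\iota \in I'$, there is a vertex $v \in \iota$ whose positive color $c$ is unique within $\iota$.

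The crux of the argument is the following observation. Because no interval of $L$ shares a point with any interval of $R$, the vertex $v$ can lie in intervals of at most one of the two subfamilies; without loss of generality, assume $v$ lies in no interval of $R$. Every interval of $R$ is contained in $\iota$, and $c$ is attained on exactly one vertex of $\iota$ (namely $v$), so color $c$ does not appear on any vertex of any interval of $R$. Thus the given cf-coloring of $I'$, restricted to the vertices covered by $R$, is a cf-coloring of $R$ (each hyperedge of $R$ still retains its unique-color witness) that uses only the $k-2$ positive colors in $\{1,\ldots,k-1\}\setminus\{c\}$. Applying the inductive hypothesis to the $\calJ_{k-1}$ configuration $R$, viewed as a superset of itself, forces at least $k-1$ positive colors, a contradiction.

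The delicate point to pin down is the disjointness step: one has to use both that $L$ and $R$ share no common vertex (so that $v$ can ``save'' at most one side) and that the intervals of $L$ and $R$ both lie inside $\iota$ (so that uniqueness of $c$ within $\iota$ propagates to the complete absence of $c$ on the whole other side). With that observation in place the induction closes immediately.
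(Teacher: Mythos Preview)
Your proof is correct and follows essentially the same inductive strategy as the paper's: both argue by induction on $k$, unpack the recursive decomposition $I = L \cup R \cup \{\iota\}$, and exploit that the point sets covered by $L$ and $R$ are disjoint and both contained in $\iota$. The only cosmetic difference is the direction of the final contradiction: the paper observes that the inductive hypothesis forces \emph{all} available colors to appear among the points of $L$ and likewise among the points of $R$, whence no color can be unique in $\iota$; you instead start from the unique color $c$ in $\iota$, note that it must be absent from one side, and contradict the inductive lower bound on that side. These are contrapositives of one another, so the arguments are equivalent.
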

\begin{proof}
We use induction on $k$. For $k=1$, the statement is trivially true.
Assume it is true for $k$, we will prove it for $k+1$. Assume, for
the sake of contradiction, that there is a conflict-free coloring $C$
with just $k$ colors of a set of intervals $I'$ that is a superset
of a $\calJ_{k+1}$ configuration $I$.
Then, by definition of $\calJ_{k+1}$,  
$I = L \cup R \cup \{\iota\}$,
where both $L$, $R \in \calJ_{k}$, 
no interval from $L$ has a
common point with an interval from $R$,
and interval $\iota$ 
includes every interval 
in $L$ and every interval in $R$.
By the inductive hypothesis, 
the points contained in intervals of $L$ use $k$ colors 
and also 
the points contained in intervals of $R$ use $k$ colors.
The above two pointsets are disjoint
and the interval $\iota$ includes both pointsets.
As a result, $\iota$ is not
conflict-free colored, which is a contradiction.
\end{proof}

We are now ready to bound the approximation ratio of the proposed
algorithm.

\begin{theorem}
The conflict-free coloring algorithm for hypergraphs with respect to
a subset of intervals is a 2-approximation algorithm.
\end{theorem}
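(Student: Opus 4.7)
The plan is to combine the preceding lemma on $\calJ_k$ configurations with a structural analysis of the algorithm's recursive behaviour. Specifically, I prove by induction on $k\geq 1$ the claim that whenever the algorithm's main loop executes iteration $2k-2$ (i.e., $\E^{2k-2}\neq\emptyset$), the input $I$ already contains a $\calJ_k$ configuration. Via the preceding lemma this yields $\chicf(H)\geq k$, and a short case analysis on the parity of the highest color $L$ the algorithm uses (which equals the first $\ell$ with $\E^\ell=\emptyset$) then gives $L\leq 2\chicf(H)$, i.e., the claimed 2-approximation bound.

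The base case $k=1$ is immediate: any element of $\E^0=\E$ gives a singleton $\calJ_1$ configuration. For the inductive step, I pick $e\in\E^{2k-2}$ and its original $\iota\in I$ (so $e=\iota\cap V^{2k-2}$). Because $e$ descended from level $2k-4$, the ancestor $e_{2k-4}=\iota\cap V^{2k-4}$ satisfies $|e_{2k-4}\cap S^{2k-4}|\geq 2$. Since $S^{2k-4}$ consists of the right endpoints of the greedy independent family $F^{2k-4}$, we obtain two disjoint intervals $A,B\in F^{2k-4}$ whose right endpoints lie in $\iota$. I then consider the two sub-instances of $I$ consisting, respectively, of intervals contained in (the originals of) $A$ and $B$; a short argument using the activity-selection greedy rule shows that these sub-instances sit inside $\iota$ as disjoint regions and that the main algorithm's execution, restricted to each sub-instance, coincides with an honest run of the algorithm on that sub-instance. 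Thus each sub-run reaches iteration $2(k-1)-2$ of its own loop, and the inductive hypothesis furnishes $\calJ_{k-1}$ configurations inside $A$ and inside $B$. Combining these with the outer $\iota$ yields the required $\calJ_k$ configuration in $I$.

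The main obstacle will be the bookkeeping in the inductive step: showing (i) that the originals of $A$ and $B$ can be certified to lie within $\iota$ via the greedy rule (so that the inner $\calJ_{k-1}$ configurations obtained from the sub-instances genuinely produce intervals contained in $\iota$, as required by the definition of $\calJ_k$), and (ii) that restricting the main run to a sub-instance coincides with an independent run of the algorithm on that sub-instance, so that the inductive hypothesis applies verbatim. Once these two technical points are handled, the induction closes and the preceding lemma delivers the 2-approximation.
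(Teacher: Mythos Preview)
Your overall strategy---induct to find a $\calJ_{\lceil k/2\rceil}$ configuration whenever the algorithm uses $k$ colors, then invoke the preceding lemma---is the same as the paper's. The difference, and the gap, is in how you organise the inductive step.

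You go back two levels to $F^{2k-4}$, pick two independent intervals $A,B$ whose right endpoints lie in $\iota$, and then want to (i) argue that the originals of $A,B$ sit inside $\iota$ and (ii) restrict the algorithm's run to the sub-instance of intervals contained in the original of $A$ (resp.\ $B$) and claim this coincides with an independent run on that sub-instance. Both points are genuinely problematic. For (i), two right endpoints in $\iota$ are not enough: the leftmost of the two intervals may well start to the left of $\iota$ (the paper's figure shows exactly this for $\iota'_1$). For (ii), the greedy activity-selection on the full instance is influenced by intervals that overlap $\hat A$ without being contained in it; such intervals can change which right endpoints are selected inside $\hat A$, so the restricted execution need not match an honest run on the sub-instance. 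There is no ``short argument using the greedy rule'' that fixes this, and the student's plan as stated does not close.

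The paper avoids both obstacles by strengthening the inductive hypothesis and never passing to sub-instances. It proves: \emph{if an interval $\iota$ survives to iteration $\ell=k-1$, then a $\calJ_{\lceil k/2\rceil}$ configuration lies entirely inside $\iota$}. In the step it descends two levels but uses \emph{three} members of the independent set $F^{\ell-2}$, namely those ending at $u$, $w$, $v$ with $u<w<v$ and $u,v\in\iota$; independence then forces the intervals $\iota''_1,\iota''_2$ ending at $w,v$ to start strictly to the right of $u$, hence to lie inside $\iota$. Since $\iota''_1,\iota''_2$ themselves survive to iteration $\ell-2$, the (strengthened) inductive hypothesis applies directly to them---no sub-instance restriction is needed. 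If you rework your induction to carry the ``contained in $\iota$'' clause and use the three-interval observation at level $\ell-2$, the argument goes through cleanly.
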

\begin{proof}
It is enough to prove that if
some hyperedge (or interval), say $\iota$,
reaches iteration with $\ell=k-1$ of the loop (i.e.,
the algorithm uses at least $k$ colors),
then the input contains as a subset a
$\calJ_{\lceil k/2 \rceil}$ configuration 
and moreover this configuration is entirely contained in $\iota$.

We prove it by induction. For $k=1,2$, it is true,
because there is at least one interval in the input, and therefore
at least one non-zero color is needed in any optimal coloring.
For $k > 2$, assume there is a vertex $v$ that gets color $k$. Then
at iteration with $\ell=k-1$ of the loop there is an interval $\iota$ with
its rightmost vertex being $v \in S^{\ell}$ (see figure~\ref{fig:jkfind}).

\begin{figure}[htbp]
\centering
\begin{tikzpicture}
\coordinate (u) at ( 2,0);
\coordinate (w) at ( 5,0);
\coordinate (v) at (10,0);
\node [below] at (u) {$u$};
\node [below] at (w) {$w$};
\node [below] at (v) {$v$};
\begin{scope}[gray,dashed]
  \draw ( 2,0)--( 2,3.3);
  \draw ( 5,0)--( 5,3.3);
  \draw (10,0)--(10,3.3);
\end{scope}
\fill (u) circle (2pt);
\fill (w) circle (2pt);
\fill (v) circle (2pt);
\draw [thick] (1,1)--(10,1);
\node [right] at (10,1) {$\iota$};
\draw [thick] (0,2)--(2,2);
\node [right] at (2,2) {$\iota'_1$};
\draw [thick] (4,2)--(10,2);
\node [right] at (10,2) {$\iota'_2$};
\draw [thick] (3,3)--(5,3);
\node [right] at (5,3) {$\iota''_1$};
\draw [thick] (8,3)--(10,3);
\node [right] at (10,3) {$\iota''_2$};
\end{tikzpicture}
\caption{Intervals in an input using $k$ colors}%
\label{fig:jkfind}%
\end{figure}
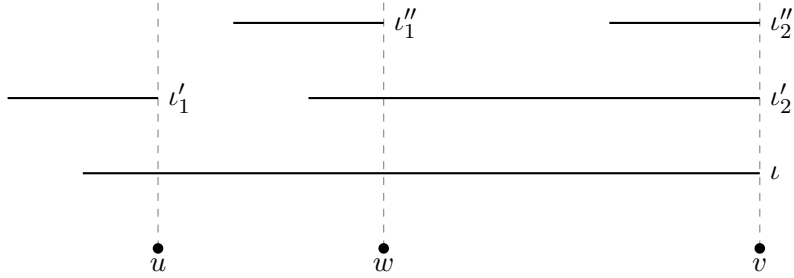

Since $\iota$ was not removed
in the previous iteration $\ell-1$, there were two vertices of
$\iota$ in
$S^{\ell-1}$, say $u$ and $v$, with $u < v$. Also, since $u$ and $v$
are in $S^{\ell-1}$ there are two intervals with them as right
endpoints in the independent set computed at iteration $\ell-1$, say
$\iota'_1$ and $\iota'_2$. Since $\iota'_2$ was not removed in the iteration
$\ell-2$, there were two vertices of $\iota'_2$ in $S^{\ell-2}$,
say $w$ and $v$,
with $u < w < v$. Also, since $u$, $w$, and $v$ are in  $S^{\ell-2}$
there are three intervals with them as right endpoints in the
independent set computed at iteration $\ell-2$; call $\iota''_1$ the one
ending at $w$ and $\iota''_2$ the one ending at $v$. Since the three
intervals are independent,  $\iota''_1$ and $\iota''_2$ start after $u$,
therefore they are fully contained in $\iota$ (which contains $u$).
By the inductive hypothesis, since each of $\iota''_1$, $\iota''_2$ reach
iteration $\ell-2$, each of them entirely contains a
$\calJ_{\lceil (k-2)/2 \rceil}$ configuration, and, since $\iota''_1$
and $\iota''_2$ are disjoint, together with $\iota$ they constitute a
$\calJ_{\lceil k/2 \rceil}$ configuration.
\end{proof}

\section{A tight instance for the 2-approximation algorithm}%
\label{sec:tight}

For $k \geq 2$, we intend to define an input $I_k$ that is a tight instance
for the approximation algorithm, i.e., an instance that forces the algorithm
to use at least twice the number of colors in an optimal coloring.
Before doing that, we define some notation
that will prove useful.

\begin{definition}
Given a set of intervals $I$ and a natural number $d$, we define
$I^{+d}$ to be the set of intervals, where all intervals of $I$
are shifted $d$ to the right, i.e.,
\[I^{+d} = \{[i+d,j+d] \mid [i,j] \in I \}.\]
\end{definition}

\begin{definition}
Given a set of intervals $I$, we define the \emph{length} of $I$, denoted
$\length(I)$ to be the rightmost point occurring in any of the
intervals of $I$ minus the leftmost point occurring in any of the
intervals of $I$ plus one.
\end{definition}

Now, we are ready to proceed with the definition of the tight
instance.

\begin{definition}
For $k=2$ the input $I_2$ has length equal to
four and consists of three intervals.
\[I_2 = \{[1,2], [3,3], [2,4] \}\]
For $k > 2$ the input is defined recursively as follows.
\[I_{k+1} = I_k \cup I_k^{+\length(I_k)} \cup
  \{[\length(I_k)-k+1, 2\length(I_k)+1]\}\]
\end{definition}

Abusing notation,
we call the $I_k$ component the \emph{left} $I_k$ part of $I_{k+1}$
and the $I_k^{+\length(I_k)}$ component the \emph{right} $I_k$ part of
$I_{k+1}$. These left and right parts are disjoint.
Input $I_4$ is shown in figure~\ref{fig:inputk4}.
Moreover, in the figure,
under the vertices of the input we give the coloring produced by 
the 2-approximation
algorithm and then an optimal conflict-free coloring.

\begin{figure}[htbp]
\centering
\begin{tikzpicture}[scale=0.7]
\foreach \x in { 1, ..., 19 }
    \fill[black] (\x,0) circle (0.05) ;
\draw ( 0.9 , 1 ) -- ( 2.1 , 1 ) ;
\draw ( 2.9 , 1 ) -- ( 3.1 , 1 ) ;
  \draw ( 1.9 , 2 ) -- ( 4.1 , 2 ) ;
\draw ( 4.9 , 1 ) -- ( 6.1 , 1 ) ;
\draw ( 6.9 , 1 ) -- ( 7.1 , 1 ) ;
  \draw ( 5.9 , 2 ) -- ( 8.1 , 2 ) ;
    \draw ( 2.9 , 3 ) -- ( 9.1 , 3 ) ;
\draw ( 9.9 , 1 ) -- ( 11.1 , 1 ) ;
\draw ( 11.9 , 1 ) -- ( 12.1 , 1 ) ;
  \draw ( 10.9 , 2 ) -- ( 13.1 , 2 ) ;
\draw ( 13.9 , 1 ) -- ( 15.1 , 1 ) ;
\draw ( 15.9 , 1 ) -- ( 16.1 , 1 ) ;
  \draw ( 14.9 , 2 ) -- ( 17.1 , 2 ) ;
    \draw ( 11.9 , 3 ) -- ( 18.1 , 3 ) ;
      \draw ( 6.9 , 4 ) -- ( 19.1 , 4 ) ;
\end{tikzpicture}

\makebox[12.9cm][l]{%
0\hfill 1\hfill 2\hfill 0\hfill 0\hfill 1\hfill 3\hfill 0\hfill
0\hfill 0\hfill 1\hfill 2\hfill 0\hfill 0\hfill 1\hfill 4\hfill
0\hfill 0\hfill 0\ }

\makebox[12.9cm][l]{%
1\hfill 0\hfill 1\hfill 2\hfill 1\hfill 0\hfill 1\hfill 0\hfill
0\hfill 1\hfill 0\hfill 1\hfill 2\hfill 1\hfill 0\hfill 1\hfill
0\hfill 0\hfill 0\ }

\caption{Input $I_4$, algorithm cf-coloring, and optimal cf-coloring}%
\label{fig:inputk4}%
\end{figure}
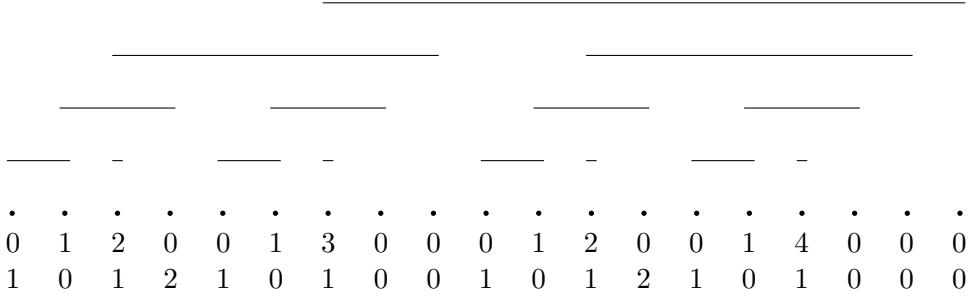

It is not difficult to see that the length of the instance
satisfies the recurrence relation
\begin{equation}
\length(I_{k+1}) = 2\length(I_k) + 1 , \label{eq:Iklength}
\end{equation}
which implies, since $\length(I_2) = 4$, that
\(\length(I_k) = 5 \cdot 2^{k-2} - 1\).

Another notion that will prove useful is the \emph{level} of each
interval in the above instance that we define in the following.

\begin{definition}
In input $I_2$, intervals $[1,2]$ and $[3,3]$ are of level 1 and
interval $[2,4]$ is of level 2. In the recursively defined instance
\[I_{k+1} = I_k \cup I_k^{+\length(I_k)} \cup
  \{[\length(I_k)-k+1, 2\length(I_k)+1]\}\]
the intervals of the $I_k$ part have
the same levels as the corresponding intervals in the $I_k$ instance,
the intervals of the $I_k^{+\length(I_k)}$ part have the same levels
as the corresponding intervals of the $I_k$ instance before the
`$+\length(I_k)$' operation,
and interval $[\length(I_k)-k+1, 2\length(I_k)+1]$ has level $k+1$.
\end{definition}

In fact, in %
figure~\ref{fig:inputk4}
%
the vertical coordinate
of each interval signifies its level, with higher intervals having
higher level.

\begin{lemma}\label{lem:leftmostklevel}%
For $k \geq 3$, in $I_k$, the leftmost point of the level $k$
interval is the same as the rightmost level $1$ interval in the
left $I_{k-1}$ part of $I_k$.
\end{lemma}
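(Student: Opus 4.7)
The plan is to reduce both sides of the asserted equality to closed-form expressions in $\length(I_{k-1})$ and $k$, and then match them. From the recursive definition, the level-$k$ interval of $I_k$ (the newly added top interval at stage $k$) is $[\length(I_{k-1})-k+2,\,2\length(I_{k-1})+1]$, so its leftmost point equals $\length(I_{k-1})-k+2$. The task then reduces to showing that the leftmost point of the rightmost level-$1$ interval in the left $I_{k-1}$ part of $I_k$ is also $\length(I_{k-1})-k+2$.

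To handle the right-hand side, let $r(k)$ denote the leftmost point of the rightmost level-$1$ interval of $I_k$. Inspection of $I_2=\{[1,2],[3,3],[2,4]\}$ gives $r(2)=3$. For $k\ge 3$ I would use the decomposition $I_k = I_{k-1} \cup I_{k-1}^{+\length(I_{k-1})} \cup \{\iota\}$: the appended interval $\iota$ has level $k$, and every interval of the shifted copy $I_{k-1}^{+\length(I_{k-1})}$ lies strictly to the right of every interval of $I_{k-1}$. Since the shift preserves levels, the rightmost level-$1$ interval of $I_k$ is exactly the image of the rightmost level-$1$ interval of $I_{k-1}$ under the $+\length(I_{k-1})$ shift, which gives the recurrence $r(k)=r(k-1)+\length(I_{k-1})$.

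A short induction on $k$, combining this recurrence with $\length(I_k)=2\length(I_{k-1})+1$ (equation~\eqref{eq:Iklength}), then proves that $r(k)=\length(I_k)-k+1$ for all $k\ge 2$. The base case reads $3=4-1$, and the induction step is the one-line calculation
\[
r(k)=r(k-1)+\length(I_{k-1})=\bigl(\length(I_{k-1})-k+2\bigr)+\length(I_{k-1})=\length(I_k)-k+1.
\]
Applying this formula with $k-1$ in place of $k$ yields $r(k-1)=\length(I_{k-1})-k+2$, which matches the leftmost point of the level-$k$ interval of $I_k$ computed first, and the lemma follows.

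The only delicate point, and the main (minor) obstacle, is keeping the level/shift bookkeeping straight: one has to verify that the rightmost level-$1$ interval of $I_k$ genuinely comes from the shifted copy and not from the original $I_{k-1}$ or from the new top interval. Both verifications are immediate — the two $I_{k-1}$ parts are disjoint with the shifted one entirely to the right, and the newly added interval is of level $k\ge 3>1$ — so once these are recorded, the rest is routine arithmetic.
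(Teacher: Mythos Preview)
Your proof is correct and follows essentially the same approach as the paper's: both define the quantity $r(m)=\length(I_m)-m+1$ for the position of the rightmost level-$1$ interval of $I_m$, verify it at the base case, and push through the induction step via the shift-by-$\length(I_{k-1})$ relation combined with the length recurrence~\eqref{eq:Iklength}. Your write-up is in fact a bit more explicit than the paper's (you spell out the recurrence $r(k)=r(k-1)+\length(I_{k-1})$ and justify why the rightmost level-$1$ interval lives in the shifted copy), but the underlying argument is the same.
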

\begin{proof}
We prove by induction that the rightmost level $1$
interval of the left $I_{k-1}$ part of $I_k$ is at position
$\length(I_{k-1})-(k-1)+1$.
For $I_3$, the rightmost level 1 interval of the left $I_2$ part
of $I_3$ consists of point $4-(3-1)+1=3$.
By the inductive hypothesis, the rightmost level $1$
interval of the left $I_{k-1}$ part of $I_k$ is at $\length(I_{k-1})-(k-1)+1$.
Then for $I_{k+1}$, the rightmost level $1$ interval of its
left $I_{k}$ part is at
\[
  \length(I_{k-1})-(k-1)+1 + \length(I_{k-1}) =
  (2\length(I_{k-1})+1) + k - 1 = \length(I_k)+k-1.
\]
The last equality is implied by equation~\eqref{eq:Iklength}.
\end{proof}

\begin{lemma}\label{lem:IkcontainsJk2}
Instance $I_k$ contains a $\calJ_{\ceil{k/2}}$ configuration
as a subset.
\end{lemma}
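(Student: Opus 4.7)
The plan is to proceed by induction on $k$, with base cases $k=2$ and $k=3$. For $k=2$, any singleton is a $\calJ_1$ configuration, and $I_2$ is nonempty. For $k=3$, I would exhibit the configuration $L\cup R\cup\{\iota\}$ with $L=\{[5,6]\}$, $R=\{[7,7]\}$, and $\iota=[3,9]$: here $L$ and $R$ are two vertex-disjoint $\calJ_1$ configurations drawn from the right $I_2^{+4}$ part of $I_3$, and $\iota$ is the level-$3$ interval of $I_3$, which contains both $[5,6]$ and $[7,7]$.

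For the inductive step, assume the claim for $k-2$ and take $k\ge 4$. The key observation is that, since $k-1\ge 3$, the right part $I_{k-1}^{+\length(I_{k-1})}$ of $I_k$ itself decomposes recursively as the union of two shifted copies of $I_{k-2}$, namely $I_{k-2}^{+\length(I_{k-1})}$ and $I_{k-2}^{+\length(I_{k-1})+\length(I_{k-2})}$, together with one additional top interval. Applying the inductive hypothesis to each of these two shifted copies (and using the obvious translation invariance of $\calJ$ configurations) yields two $\calJ_{\lceil(k-2)/2\rceil}$ configurations $L$ and $R$, which are vertex-disjoint since the copies of $I_{k-2}$ occupy disjoint position ranges. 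Taking $\iota$ to be the level-$k$ interval of $I_k$, namely $[\length(I_{k-1})-k+2,\,2\length(I_{k-1})+1]$, I then claim that $L\cup R\cup\{\iota\}$ is the desired $\calJ_{\lceil(k-2)/2\rceil+1}=\calJ_{\lceil k/2\rceil}$ configuration in $I_k$.

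The main technical point will be verifying that $\iota$ contains every interval of $L\cup R$. All such intervals lie within the position range $[\length(I_{k-1})+1,\,\length(I_{k-1})+2\length(I_{k-2})]$ occupied by the two shifted copies of $I_{k-2}$. Here $\length(I_{k-1})+1\ge\length(I_{k-1})-k+2$ holds for every $k\ge 1$, and the recurrence $\length(I_{k-1})=2\length(I_{k-2})+1$ rewrites the right endpoint as $2\length(I_{k-1})-1\le 2\length(I_{k-1})+1$. Hence the whole range, and therefore every interval in $L\cup R$, sits inside $\iota$, completing the induction.
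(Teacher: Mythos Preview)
Your proof is correct and follows essentially the same approach as the paper: both argue by induction with base cases $k=2,3$, and for the step both locate two disjoint shifted copies of $I_{k-2}$ inside the right $I_{k-1}$ part of $I_k$, apply the hypothesis to each, and take the level-$k$ interval as the enclosing $\iota$. Your write-up is more explicit in verifying the endpoint inequalities, but the underlying argument is the same.
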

\begin{proof}
By induction. It is true for $k=2$ and $k=3$, because $I_2$
contains a $\calJ_1$ configuration and $I_3$ contains a $\calJ_2$
configuration. For $k>3$, in instance $I_k$, the interval of level
$k$ contains completely a copy of $I_{k-1}$, in which two disjoint
copies of $I_{k-2}$ are contained. By the inductive hypothesis, in
each copy of $I_{k-2}$, a $\calJ_{\ceil{(k-2)/2}}$ configuration
is contained. These two disjoint $\calJ_{\ceil{(k-2)/2}}$
configurations, together with the level $k$ interval constitute a
$\calJ_{\ceil{k/2}}$ configuration in $I_k$.
\end{proof}

\begin{lemma}\label{lem:Ikcolk2}
There is a conflict-free coloring of $I_k$ with $\ceil{k/2}$ colors.
\end{lemma}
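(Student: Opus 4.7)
The plan is to prove this by induction on $k$, constructing the coloring $C_k$ explicitly and maintaining an auxiliary invariant. The recursive structure of $I_k$ strongly suggests gluing two copies of $C_{k-1}$ together with a single local modification. The base cases are $k=2$ with $C_2 = (1,0,1,0)$ and $k=3$ with $C_3 = (1,0,1,2,1,0,1,0,0)$, both easily checked directly. The auxiliary invariant I would carry is: when $k$ is odd, the top color $\ceil{k/2}$ occurs \emph{exactly once} in $C_k$, at position $\length(I_{k-1})$, i.e.\ the rightmost vertex of the left $I_{k-1}$ sub-part of $I_k$.

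For the inductive step I would split on parity of $k$, because the color budget $\ceil{k/2}$ grows only when $k$ switches from even to odd. When $k$ is even, no new color is needed: I would concatenate two copies of $C_{k-1}$ on the two $I_{k-1}$ sub-parts and give color $0$ to the extra rightmost vertex. When $k$ is odd, I introduce the new top color $\ceil{k/2}$; since $k-1$ is even, position $\length(I_{k-1})$ in $C_{k-1}$ is colored $0$ (being precisely the extra vertex of the previous even-case construction), and I would recolor it with the new top color in the left copy only.

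Conflict-freeness then only needs to be checked for the new top-level interval $\iota = [\length(I_{k-1})-k+2,\length(I_k)]$, since every other hyperedge lies inside one of the two $I_{k-1}$ copies and inherits its conflict-free color from $C_{k-1}$. In the odd case, $\iota$ contains position $\length(I_{k-1})$, which carries the globally unique new top color, so $\iota$ is conflict-free, and the invariant for odd $k$ is re-established. In the even case, the invariant applied to $C_{k-1}$ (which is odd) says its top color occurs exactly once in each copy, at positions $\length(I_{k-2})$ and $\length(I_{k-2})+\length(I_{k-1})$; a short length calculation using $\length(I_{k-1})=2\length(I_{k-2})+1$ shows that the first position lies strictly to the left of $\iota$ while the second lies inside $\iota$, so the top color is unique within $\iota$.

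The main (and really only) nontrivial obstacle is the length arithmetic in the even case, which reduces to checking $k \leq \length(I_{k-2})+2$; this holds with plenty of slack since $\length(I_{k-2})$ grows exponentially in $k$. A minor side check in the odd case is that recoloring the rightmost vertex of the left $I_{k-1}$ from $0$ to the new top color does not break any pre-existing conflict-free constraint; this is immediate because the only interval of $I_{k-1}$ containing its rightmost vertex is the top-level interval itself, which was already conflict-free in $C_{k-1}$ via a unique color located strictly to its left, so the recolored vertex merely introduces an additional, globally unique color there.
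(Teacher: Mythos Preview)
Your proposal is correct and follows essentially the same approach as the paper: the recursive coloring you build (concatenate two copies of $C_{k-1}$, append a $0$, and in the odd case recolor the last vertex of the left copy with the new top color) is exactly the paper's construction, and your parity split matches theirs. The only cosmetic difference is that where you carry an explicit invariant on the position and uniqueness of the top color and verify the even case by the length inequality $k\le\length(I_{k-2})+2$, the paper instead invokes Lemma~\ref{lem:leftmostklevel} to locate the left endpoint of the level-$k$ interval; the content is the same.
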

\begin{proof}
We define recursively a coloring of $I_k$ that uses
$\ceil{k/2}$ colors and we prove by induction that it is
conflict-free.

For $k=2$ the coloring is $1010$, which can be easily checked to
be conflict-free.

If $k$ is odd, take a coloring of $I_{k-1}$ and in its rightmost
position use color $\ceil{k/2}$, concatenate a coloring of
$I_{k-1}$, and then concatenate color `0'. By induction, the left
$I_{k-1}$ part is conflict-free because we started with a conflict-free
coloring and we introduced a new color $\ceil{k/2}$, the right
$I_{k-1}$ part is conflict-free because it is colored with a
conflict-free coloring. The level $k$ interval
is conflict-free because of color $\ceil{k/2}$ that occurs
uniquely.

If $k$ is even, with $k>2$,
take a coloring of $I_{k-1}$, concatenate a coloring of
$I_{k-1}$, and then concatenate color `0'. By induction, the left
$I_{k-1}$ part is conflict-free because it is colored with a
conflict-free coloring, the right
$I_{k-1}$ part is conflict-free because it is colored with a
conflict-free coloring. The level $k$ interval
is conflict-free because of color $\ceil{k/2}$ that occurs in the
right $I_{k-1}$ part and because its leftmost point, by
lemma~\ref{lem:leftmostklevel}, is to the right of the
$\ceil{k/2}$ color occurring in the left $I_{k-2}$ part of the
left $I_{k-1}$ part.
\end{proof}

\begin{corollary}
An optimal coloring of $I_k$ uses $\ceil{k/2}$ colors.
\end{corollary}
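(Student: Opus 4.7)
The proof is essentially a matter of combining two bounds, one upper and one lower, that have already been established in the preceding lemmas, so the plan is short.

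First I would observe the upper bound: \lemref{Ikcolk2} already exhibits an explicit conflict-free coloring of $I_k$ using exactly $\ceil{k/2}$ colors, so $\chicf(I_k) \leq \ceil{k/2}$.

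Next I would establish the matching lower bound. By \lemref{IkcontainsJk2}, the instance $I_k$ contains a $\calJ_{\ceil{k/2}}$ configuration as a subset. Earlier in the paper we proved that any conflict-free coloring of a set of intervals that is a superset of a $\calJ_m$ configuration must use at least $m$ colors. Applying this with $m = \ceil{k/2}$ to $I_k$ itself yields $\chicf(I_k) \geq \ceil{k/2}$.

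Combining the two inequalities gives $\chicf(I_k) = \ceil{k/2}$, which is precisely the statement of the corollary. There is no real obstacle here; the work has already been done in the two lemmas, and this corollary simply records their conjunction.
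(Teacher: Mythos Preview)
Your proposal is correct and is exactly the argument the paper intends: the corollary is stated without proof precisely because it is the immediate conjunction of \lemref{IkcontainsJk2} (giving the lower bound via the $\calJ_{\ceil{k/2}}$ configuration and the earlier lemma on $\calJ_k$ configurations) and \lemref{Ikcolk2} (giving the upper bound via an explicit coloring).
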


We now describe a family of hypergraphs that arise after the first
iteration of the while loop of the 2-approximation algorithm,
if the initial input is $I_k$.

\begin{definition}
The instance $L_0$ is on one vertex, namely the vertex set is \{1\},
and contains no
interval, i.e, $L_0 = \{\}$. The length of instance $L_0$ is
defined to be 1. For $k>0$, $L_{k+1}$ is defined
recursively, as follows.
\[L_{k+1} = L_k \cup L_k^{+\length(L_k)} \cup
            \{[\length(L_k),2\length(L_k)]\}
\]
\end{definition}
It is not difficult to see that the length satisfies the
recurrence relation $\length(L_{k+1})=2\length(L_k)$,
which implies $\length(L_k)=2^k$. We say that
$L_{k+1}$ consists of a left $L_k$ part, a right $L_k$ part,
and the interval $[2^{k},2^{k+1}]$.

\begin{proposition}\label{prop:Ikalgk}
The 2-approximation algorithm colors $I_k$ with $k$ colors.
\end{proposition}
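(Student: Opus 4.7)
The plan is to prove, by induction on $k$, two structural facts in tandem: (A) the first iteration of the while loop on input $I_k$ leaves, on its hitting set (after the natural relabeling), a hypergraph isomorphic to $L_{k-1}$; and (B) the first iteration on input $L_k$ leaves, on its hitting set (after the natural relabeling), a hypergraph isomorphic to $L_{k-1}$. Iterating (A) once and then (B) a total of $k-1$ times, the algorithm on $I_k$ assigns color $0$ in iteration $\ell=0$ (via (A)), colors $1,\ldots,k-1$ in the subsequent while-loop iterations (via repeated applications of (B)), and finally color $k$ to the single remaining vertex via the closing \texttt{if} statement. Thus the maximum color used is exactly $k$, proving the proposition.

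For (B), I would argue that the greedy hitting-set procedure on $L_k$ chooses exactly the $2^{k-1}$ length-$2$ intervals $\{[2i-1, 2i] : i = 1, \ldots, 2^{k-1}\}$: they are pairwise disjoint, so they all fit in the independent set; and every other (``middle'') interval arises recursively as a top interval $[\length(L_j), 2\length(L_j)]$ of some nested $L_{j+1}$ sub-instance, so it has strictly larger right endpoint than, and shares a vertex with, some already-picked length-$2$ interval, and is hence skipped. Consequently the hitting set is $\{2,4,\ldots,2^k\}$; a direct inspection of the recursive definition of $L_k$ shows that, after relabeling this set to $\{1, \ldots, 2^{k-1}\}$, the remaining subhypergraph is exactly $L_{k-1}$.

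For (A), I would argue that the greedy step on $I_k$ chooses exactly the $2^{k-1}$ level-$1$ intervals. These are pairwise disjoint; and any level-$j$ interval with $j \geq 2$ is the top interval of some nested $I_j$ sub-instance of $I_k$ whose left endpoint, by Lemma~\ref{lem:leftmostklevel}, coincides with the right endpoint of the rightmost level-$1$ interval in the left $I_{j-1}$ subpart---which is processed and picked first, forcing the level-$j$ interval to be skipped. The identification of the remaining subhypergraph with $L_{k-1}$ is then by induction on $k$: the base case $k=2$ is direct, and in the inductive step the left and right $I_{k-1}$ parts of $I_k$ are sent to the left and right $L_{k-2}$ parts of $L_{k-1}$, while the top-level interval $[\length(I_{k-1})-k+2,\, 2\length(I_{k-1})+1]$ of $I_k$ is sent, using Lemma~\ref{lem:leftmostklevel} for its left endpoint, to the top interval $[2^{k-2}, 2^{k-1}]$ of $L_{k-1}$.

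The main technical obstacle is carefully tracking the endpoints through the relabelings in (A); the specific ``$-k+1$'' offset in the recursive definition of $I_{k+1}$, together with Lemma~\ref{lem:leftmostklevel}, is exactly what is needed so that the top-level interval of $I_k$ lands on the top interval of $L_{k-1}$ under the relabeling, closing the induction.
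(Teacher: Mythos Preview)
Your proposal is correct and follows essentially the same approach as the paper: the paper likewise proves that $I_k$ is followed by $L_{k-1}$ (your (A)) and that $L_k$ is followed by $L_{k-1}$ (your (B)), each by induction on $k$, and then concatenates these to obtain the chain $I_k, L_{k-1}, \dots, L_1, L_0$ of hypergraphs considered by the algorithm. The details you sketch---that the greedy independent set consists exactly of the level-$1$ intervals in the $I_k$ case and of the length-$2$ intervals in the $L_k$ case, and the use of Lemma~\ref{lem:leftmostklevel} to track the top interval's left endpoint through the relabeling---match the paper's argument closely.
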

\begin{proof}
Assume input $I_k$ is given to the 2-approximation algorithm.
In the iteration of the while loop
where the algorithm colors points with color
$\ell$ ($\ell = 0, 1,
\dots$), the algorithm considers a
hypergraph $H_\ell$. We will prove that the algorithm
considers the hypergraphs
\[H_0 = I_k, H_1 = L_{k-1}, \dots, H_{k-1} = L_1, H_k = L_0, \]
and then it terminates, i.e., it uses $k$ colors. We say that
$H_i$ is \emph{followed} by $H_{i+1}$, to show that two
hypergraphs $H_i$, $H_{i+1}$ are considered successively by the
algorithm, in that order.

First, we prove that for every $k \geq 2$, $I_k$ is followed by
$L_{k-1}$, by induction on $k$.
It is not difficult to see that, when $I_k$ is considered,
the independent set of intervals chosen consists of all level 1
intervals of $I_k$ and the hitting set that is chosen
consists of the right endpoints of all level 1 intervals of $I_k$
(a formal proof can be carried out by induction on $k$).
For $k=2$ it is not difficult to check that $I_2$ is followed by
$L_1$. For $k>2$, $I_k$ consists of a left $I_{k-1}$ part which
induces a left $L_{k-2}$ part and a right $I_k$ part, which
induces a right $L_{k-2}$ part (we use the inductive hypothesis).
From lemma~\ref{lem:leftmostklevel},
the leftmost point of the level $k$
interval is the same as the rightmost level $1$ interval in the
left $I_{k-1}$ part of $I_k$, and therefore the level $k$
interval induces
an interval that starts from the last point of the left $L_{k-2}$
part of the hypergraph that follows $I_k$
and ends at the last point of the right $L_{k-2}$
part of the hypergraph that follows $I_k$.
To summarize, the $I_k$ is followed by a left $L_{k-2}$
part, a right $L_{k-2}$ part and interval $[2^{k-2},2^{k-1}]$, i.e., it is
$L_{k-1}$.

Then, we prove that for $k > 0$, $L_k$ is followed by $L_{k-1}$,
by induction on $k$.
For $k=1$, it is not difficult to see that for $L_1$ the interval
$[1,2]$ is chosen and its right endpoint, i.e., $2$,
makes up the hitting set. Then, easily, $L_1$ is followed by $L_0$.
For $k>1$,
when $L_k$ is considered, the independent set of intervals that is
chosen consists of the intervals of length two
of the left $L_{k-1}$ part
\[\{[1,2],[3,4], \dots, [2^{k-1}-1,2^{k-1}]\}\]
and the intervals of length two of the right
$L_{k-1}$ part
\[\{[2^{k-1}+1,2^{k-1}+2],[2^{k-1}+3,2^{k-1}+4],
     \dots, [2^{k}-1,2^{k}]\} . \]
Therefore the hitting set is
\[ \{2,4, \dots 2^{k-1}\} \cup
   \{2^{k-1}+2, 2^{k-1}+4, \dots, 2^k\} =
   \{i \colon \text{odd} \mid 2 \leq i \leq 2^k\}
\]
and consists of $2^{k-1}$ elements.
By induction, after removal of the points of the hitting set,
the left $L_{k-1}$ part induces a $L_{k-2}$ part, and the right
$L_{k-1}$ part induces a $L_{k-2}$ part. The interval
$[2^{k-1},2^k]$ of $L_k$ contains all points in
$\{2^{k-1}+2,2^{k-1}+4,\dots,2^k\}$
of the right $L_{k-1}$ part
and just point $2^{k-1}$ of the left $L_{k-1}$ part, and therefore
induces $[2^{k-2}, 2^{k-1}]$ in the hypergraph that follows $L_k$.
To summarize, the $L_k$ is followed by a left $L_{k-2}$
part, a right $L_{k-2}$ part and interval $[2^{k-2},2^{k-1}]$, i.e., it is
$L_{k-1}$.

Finally, we prove that when $L_0$ is reached, no hypergraph
follows, and the algorithm
terminates. This is true, because $L_0$ contains no interval
(hyperedge).
\end{proof}

\begin{remark}
From the above proof of proposition~\ref{prop:Ikalgk},
it is immediate that if $L_k$ is given as an input to the
2-approximation algorithm, the following sequence of hypergraphs
\[H_0 = L_k, H_1 = L_{k-1}, \dots, H_{k-1} = L_1, H_k = L_0 \]
is considered in the iterations of the while loop.
Moreover, it can also be proved, with a proof similar to those
of lemmata~\ref{lem:IkcontainsJk2} and~\ref{lem:Ikcolk2},
that an optimal coloring for $L_k$
uses $\ceil{k/2}$ colors.
Therefore, the family of instances $L_k$ is also a family of tight
instances for the 2-approximation algorithm.
However, the family of instances $I_k$ has the additional property
that no two intervals in it share a common right endpoint.
\end{remark}

\section{A quasipolynomial time algorithm}%
\label{sec:quasip}

Consider the decision problem \textsc{CFSubsetIntervals}:
\begin{quote}
``Given a subhypergraph $H=([n],I)$
of the discrete interval hypergraph $H_n$ and a natural number $k$,
is it true that
$\chicf(H) \leq k$?''
\end{quote}
Notice that the above problem is non-trivial
only when $k < \floor{\log_2 n}+1$; if
$k \geq \floor{\log_2 n} + 1$ the answer is always yes, since
$\chicf(H_n) = \floor{\log_2 n} + 1$.

Algorithm~\ref{alg:nondet} is a non-deterministic
algorithm for \textsc{CFSubsetIntervals}.

The algorithm scans points from $1$ to $n$, tries
non-deterministically every color in $\{0,\dots,k\}$
at the current point and checks if all intervals in $I$
ending at the current point have the conflict-free property.
If some interval in $I$ has not the conflict-free property under
a non-deterministic assignment, the algorithm answers `no'.
If all intervals in $I$ have the conflict-free property
under some non-deterministic assignment, the algorithm answers
`yes'.

We check if an interval in $I$
that ends at the current point, say $t$,
has the conflict-free property
in the following space-efficient way.
For every color $c$ in $\{0,\dots,k\}$, we keep track of:
\begin{enumerate}
\item[(a)]
the closest point to $t$ colored with $c$ in variable $p_c$
and
\item[(b)]
the second closest point to $t$ colored with $c$ in variable
$s_c$.
\end{enumerate}
Then, color $c$ is occurring exactly one time in $[j,t] \in I$
if and only if $s_c < j \leq p_c$.

\begin{algorithm}[htb!]
\caption{A non-deterministic algorithm deciding whether
         $\chicf(H) \leq k$ for $H=([n],I)$}
\label{alg:nondet}
\begin{algorithmic}
  \FOR{$c \gets 0 \text{ to } k$}
    \STATE $s_c \gets 0$
    \STATE $p_c \gets 0$
  \ENDFOR
  \FOR{$t \gets 1 \text{ to } n$}
     \STATE choose $c$ non-deterministically from $\{0,\dots,k\}$
     \STATE $s_{c} \gets p_{c}$
     \STATE $p_{c} \gets t$
     \FOR{$j \in \{j \mid [j,t] \in I\}$}
        \STATE IntervalConflict $\gets$ True
	\FOR{$c \gets 1 \text{ to } k$}
	   \IF{$s_c < j \leq p_c$}
	      \STATE IntervalConflict $\gets$ False
	   \ENDIF
	\ENDFOR 
	\IF {IntervalConflict}
	   \RETURN{NO}
	\ENDIF
     \ENDFOR 
  \ENDFOR
  \RETURN{YES}
\end{algorithmic}
\end{algorithm}

\begin{lemma}
The space complexity of algorithm \ref{alg:nondet} is $O(\log^{2}{n})$.
\end{lemma}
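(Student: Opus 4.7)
The plan is to bound the work space used by algorithm~\ref{alg:nondet} variable by variable, exploiting the fact that the problem is only non-trivial when $k = O(\log n)$. Recall that if $k \geq \floor{\log_2 n} + 1$, the answer is trivially yes (since $\chicf(H_n) = \floor{\log_2 n}+1$), so we may assume the algorithm is only invoked for $k < \floor{\log_2 n} + 1$, i.e.\ $k = O(\log n)$.

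First I would enumerate the data that the algorithm keeps between steps. For each color $c \in \{0,\dots,k\}$, the algorithm stores the two variables $p_c$ and $s_c$, each holding a point of $[n]$ (or $0$), hence representable in $\ceil{\log_2(n+1)} = O(\log n)$ bits. This accounts for $2(k+1) \cdot O(\log n) = O(k \log n) = O(\log^2 n)$ bits. The remaining variables are the outer index $t \in [n]$, the inner color index $c \in \{0,\dots,k\}$, the left endpoint $j \in [n]$, and the one-bit flag \emph{IntervalConflict}; these contribute only $O(\log n)$ bits in total and are dominated by the $p_c$, $s_c$ storage.

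Second, I would stress that the input $([n],I)$ itself does not count against the work space in the standard (non-deterministic) Turing machine space-complexity model: the input tape is read-only, and the non-deterministic guesses are written on a one-way guess tape. Thus the space bound concerns only the variables kept by the algorithm on the work tape. Similarly, the iteration over $\{j \mid [j,t] \in I\}$ is implemented by scanning the input, requiring only an $O(\log n)$-bit pointer to the current interval, not explicit storage of $I$.

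Putting these pieces together yields a work-space bound of $O(\log^2 n)$, dominated by the $O(k)$ many $O(\log n)$-bit registers $p_c, s_c$. The only mildly subtle point, which I would be careful about, is the implicit use of $k = O(\log n)$; without this observation one only gets $O(k \log n)$, so I would state this assumption explicitly at the start of the proof.
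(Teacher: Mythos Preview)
Your proposal is correct and follows essentially the same approach as the paper: both argue that the arrays $p$ and $s$ occupy $O(k\log n)=O(\log^2 n)$ bits because $k=O(\log n)$, and that all remaining variables fit in $O(\log n)$ bits. Your version is somewhat more explicit (spelling out the read-only input-tape convention and how the inner loop over $\{j\mid [j,t]\in I\}$ is implemented), but there is no substantive difference in the argument.
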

\begin{proof}
Since $k = O(\log n)$ and each point position can be encoded with
$O(\log n)$ bits, the arrays $p$ and $s$ (indexed by color)
take space $O(\log^2 n)$. All other variables in the algorithm can
be implemented in $O(\log n)$ space.
Therefore the above non-deterministic algorithm has space
complexity $O(\log^2 n)$.
\end{proof}

\begin{corollary}
\textsc{CFSubsetIntervals} has a quasipolynomial time deterministic
algorithm.
\end{corollary}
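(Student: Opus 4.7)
The plan is to invoke the standard simulation of a nondeterministic space-bounded machine by a deterministic time-bounded machine, using the configuration graph. The previous lemma has given us a nondeterministic algorithm whose working space is $O(\log^2 n)$ on inputs of size $n$, so the total number of distinct configurations is bounded by $2^{O(\log^2 n)}$, which is quasipolynomial in $n$.

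More concretely, I would first fix a deterministic encoding of configurations of algorithm~\ref{alg:nondet}: the pair of arrays $(p_c, s_c)_{0 \le c \le k}$ together with the current value of $t$, the inner loop variable $j$, and the \textbf{IntervalConflict} flag. Since $k = O(\log n)$ and each stored position is an element of $\{0, 1, \dots, n\}$, the bit length of a configuration is $O(\log^2 n)$, giving at most $N = 2^{O(\log^2 n)}$ configurations in total. There is a unique initial configuration, a distinguished accepting configuration (the one reached when the final \textbf{return YES} is executed), and one or more rejecting configurations.

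Next I would build the configuration graph $G$ explicitly: its vertices are the configurations described above, and there is a directed edge from $C$ to $C'$ whenever algorithm~\ref{alg:nondet} can make a single step from $C$ to $C'$ under some choice of its nondeterministic color $c \in \{0,\dots,k\}$. Given a configuration, its at most $k+1 = O(\log n)$ successors can be computed in time polynomial in $\log n$, so $G$ has $N$ vertices and total size $\tilde O(N)$, and can be constructed in time $\tilde O(N)$. The input hypergraph $H = ([n], I)$ determines which transitions are legal (i.e., which intervals end at a given $t$), and this information is consulted in polynomial time per step.

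Finally, I would decide whether the original nondeterministic algorithm has an accepting run by performing a standard reachability search (BFS or DFS) on $G$ from the initial configuration to the accepting configuration; this takes time polynomial in $|G|$, hence $2^{O(\log^2 n)}$, which is quasipolynomial in $n$. The one subtlety worth mentioning, but not a real obstacle, is that one must be careful that all nondeterministic choices \emph{and} internal deterministic steps of algorithm~\ref{alg:nondet} are captured by edges of $G$; this is routine once the configuration encoding above is adopted. Correctness of the reduction follows from the soundness and completeness of algorithm~\ref{alg:nondet} proved implicitly in the previous section.
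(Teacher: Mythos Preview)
Your argument is correct and is exactly the standard ``configuration-graph reachability'' argument underlying the inclusion $\mathrm{NSPACE}(s(n)) \subseteq \mathrm{DTIME}(2^{O(s(n))})$ that the paper invokes by citation. The only difference is granularity: the paper simply appeals to this standard complexity fact, while you spell out the construction of the configuration graph and the BFS/DFS on it.
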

\begin{proof}
By standard computational complexity theory arguments
(see, e.g., \cite{PapadimitriouCC}), we can transform
algorithm~\ref{alg:nondet}
to a deterministic algorithm solving the same problem with time
complexity $2^{O(\log^2 n)}$, i.e., \textsc{CFSubsetIntervals} has a
quasipolynomial time deterministic algorithm.
\end{proof}

\section{Discussion and open problems}

The exact complexity of computing an optimal cf-coloring for a
subhypergraph of the discrete interval hypergraph remains an
open problem. We have provided a 2-approximation algorithm.
One might try to improve the approximation ratio, find a
polynomial time approximation scheme, or even find a polynomial
time exact algorithm.
The last possibility is supported by the fact that
the decision version of the problem, \textsc{CFSubsetIntervals},
is unlikely to be NP-complete, unless NP-complete problems have
quasipolynomial time algorithms.

It would also be interesting to study the complexity
of computing optimal conflict-free colorings for subhypergraphs of
other geometric hypergraphs,
like the hypergraph induced by a set of $n$ points in the plane
with respect to
a given set of closed disks in the plane.

Finally, we introduced a slightly different cf-coloring function
$C\colon V \to \Naturals$, for which vertices colored with `0' can
not act as uniquely-colored vertices in a hyperedge.
Naturally, one could try to study the bi-criteria optimization
problem, in which there two minimization goals: (a) the number
of colors used, $\max_{v \in V}C(v)$ (minimization of frequency
spectrum use) and (b) the number of vertices with positive colors,
$\cardin{\{v \in V \mid C(v)>0\}}$ (minimization of activated
base stations).

\subsubsection*{Acknowledgments.} We wish to thank Matya Katz and
Asaf Levin for helpful discussions concerning the problems
studied in this paper.

\bibliographystyle{plain}
\bibliography{references}

\end{document}